\theoremstyle{plain} 
\newtheorem{theorem}{\indent\bfseries Theorem}[section]
\newtheorem{lemma}[theorem]{\indent\bfseries Lemma}
\newtheorem{claim}[theorem]{\indent\bfseries Claim}
\newtheorem{conjecture}[theorem]{\indent\bfseries Conjecture}
\def\section{\@startsection{section}{1}
	\z@{2ex \@plus 1ex \@minus .2ex}
	{1ex \@plus.2ex}
	{\centering\normalfont\bfseries\large}} 
\theoremstyle{definition} 
\newtheorem{definition}[theorem]{\indent\bfseries Definition}
\newtheorem{remark}[theorem]{\indent\bfseries Remark}
\begin{document}

	\pagestyle{plain}
	\thispagestyle{plain}
	
	\title[headline]
	{On the Non-semipositivity of a Nef and Big Line Bundle on Grauert's Example}
	\author[ZHANG YANGYANG]{ZHANG YANGYANG$^{1}$}
	\address{ 
		$^{1}$ Department of Mathematics \\
		Graduate School of Science \\
		Osaka Metropolitan University \\
		3-3-138 Sugimoto \\
		Osaka 558-8585 \\
		Japan 
	}
	\email{sk23162@st.omu.ac.jp}
	
	\renewcommand{\thefootnote}{\fnsymbol{footnote}}

	\renewcommand{\thefootnote}{\arabic{footnote}}

	\begin{abstract}
	We study the relation between semipositivity, nefness, and bigness of line bundles on compact Kähler manifolds. While every nef and big line bundle on a compact Kähler manifold $X$ is positive when ${\rm dim}\,X = 1$, counterexamples are known in higher dimensions. We then focus on the case of dimension two: motivated by a conjecture of Filip and Tosatti, we show that the line bundle on Grauert’s example is nef and big but not semipositive, by explicitly computing its first obstruction class, that is originally introduced by Koike as a generalization of the Ueda class.
	\end{abstract}
	
	\maketitle
	\section{Introduction}	
	Kodaira’s embedding theorem states that if a compact Kähler manifold $X$ admits a positive line bundle $L$, then $X$ can be embedded into a projective space by considering the complete linear system $|L^{\otimes m}|$ for a sufficiently large integer $m$; that is, $L$ is ample. Here, {\it positive} means that a holomorphic line bundle $L$ over $X$ admits a smooth Hermitian metric whose Chern curvature form is a positive (1, 1)-form on $X$; the notion of {\it semipositivity} is defined analogously, requiring the curvature form to be a semipositive. As every ample line bundle admits a smooth Hermitian metric with positive defnite curvature (see Section 2.1), the property of a line bundle being ample is equivalent to its being positive.

	 Since semipositivity is weaker than positivity, it is natural to ask whether there is any relation among semipositivity, nefness and bigness, where nefness and bigness can be regarded as generalizations of ampleness, as reviewed in Section 2.1. Motivated by this observation, we consider the following question(see \cite{Ki}): Does there exist a line bundle that is nef and big, although not semipositive? In the case ${\rm dim}\,X = 1$, it is known that every nef and big line bundle is positive. However, Kim constructed an explicit example (Example 2.14 in \cite{Ki}) of a nef and big line bundle that is not semipositive in the case ${\rm dim}\,X \ge 3$. This naturally shifts attention to the case ${\rm dim}\,X = 2$. In the case, Filip and Tosatti posed the following conjecture in  \cite{FT}:
	\begin{conjecture}
		A line bundle $L$ in the Grauert’s example is nef and big although not semipositive.
	\end{conjecture}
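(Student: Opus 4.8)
The plan is to establish the three properties separately: nefness and bigness by intersection theory on the surface $X$ of Grauert's example, and non-semipositivity --- the substantive part --- by computing the first obstruction class of $L$ in the sense of Koike and showing it is nonzero. Recall the concrete description of $X$: it contains a smooth elliptic curve $C$ with $C^{2}<0$ such that $L\cdot C=0$ and $N:=L|_{C}$ is a flat line bundle corresponding to a \emph{non-torsion} point of $\operatorname{Pic}^{0}(C)$, while $L$ meets every other irreducible curve strictly positively. Nefness is then a finite check: using generators of $\operatorname{Num}(X)$ one reduces $L\cdot C'\ge 0$ to finitely many cases, the only delicate one being $C$, on which $L\cdot C=0$. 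For bigness it suffices to verify $L^{2}>0$; Riemann--Roch gives $\chi(X,mL)=\tfrac{m^{2}}{2}L^{2}+O(m)$, and since $(K_{X}-mL)\cdot H<0$ for an ample $H$ once $m\gg 0$, we get $h^{2}(X,mL)=h^{0}(X,K_{X}-mL)=0$, hence $h^{0}(X,mL)\ge\chi(X,mL)\gtrsim m^{2}$ and $L$ is big.

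For non-semipositivity I argue by contradiction. If $L$ carried a smooth Hermitian metric $h$ with $\sqrt{-1}\,\Theta_{h}(L)\ge 0$, then restricting to $C$ the semipositive form $\sqrt{-1}\,\Theta_{h}(L)|_{C}$ has total mass zero (it computes $L\cdot C=0$) and hence vanishes identically, so $h$ induces a flat metric on $N=L|_{C}$. The key lemma is then that such a metric cannot exist once the first obstruction class $u_{1}(L)$ --- Koike's generalization of the Ueda class, a cohomology class on $C$ --- is nonzero: expanding $h$ in a tubular neighborhood of $C$ in powers of a local defining function of $C$ and inserting the pointwise inequality $\sqrt{-1}\,\Theta_{h}(L)\ge 0$ order by order forces the coefficient representing $u_{1}(L)$ to be a coboundary, i.e.\ $u_{1}(L)=0$. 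I will state this implication precisely following Koike and supply the neighborhood computation in our situation. Finally I would compute $u_{1}(L)$ for Grauert's example from its definition: choosing the explicit local trivializations of $X$ near $C$ and of the non-torsion flat bundle $N$ over $C=\mathbb{C}/\Lambda$, the defining cocycle of $u_{1}(L)$ reduces to an explicit \v{C}ech (or Dolbeault) representative built from elementary exponential/theta-type functions, and its class is nonzero precisely because $N$ is non-torsion --- were $N$ torsion, $u_{1}(L)$ would vanish and in fact $L$ would be semi-ample. This contradicts the key lemma and shows $L$ is not semipositive.

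The main obstacle is this last computation: one must unwind Koike's definition of the first obstruction class in the concrete geometry of Grauert's example, identify the cohomology group it inhabits, and exhibit a nonvanishing representative in which the non-torsion (and possibly a mild Diophantine) property of $N$ enters essentially. A secondary technical point is making the order-by-order neighborhood analysis behind the key lemma rigorous, in particular checking that the positivity of $\sqrt{-1}\,\Theta_{h}(L)$ genuinely controls the relevant expansion coefficient and that higher-order contributions cannot interfere.
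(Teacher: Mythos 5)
Your overall strategy is the same as the paper's: verify nefness and bigness by intersection theory, and obtain non-semipositivity by computing Koike's first obstruction class and invoking his theorem that $u_1\neq 0$ rules out a smooth semipositive metric. However, the concrete plan for the crucial step rests on a wrong model of Grauert's example. In the example treated by Filip--Tosatti (and in this paper), $X$ is the compactification of an affine line bundle over a compact Riemann surface $R$ of genus $g\ge 2$, associated with a degree-one line bundle $F$ and a chosen \emph{nonzero} class $\xi\in H^1(R,\mathcal{O}_R(F))$; the relevant curve is the section at infinity $Y\cong R$ (genus $\ge 2$, $Y^2=-1$), not a smooth elliptic curve, and the restriction $L|_Y=(p|_Y)^*F\otimes N_{Y/X}$ is holomorphically \emph{trivial} because $N_{Y/X}\cong (p|_Y)^*F^{-1}$. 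There is therefore no non-torsion flat bundle $N$ on $\mathbb{C}/\Lambda$, no theta-function representative, and no Diophantine condition anywhere in this example; the mechanism you propose for exhibiting $u_1\neq 0$ simply does not exist here, and the claim ``were $N$ torsion, $u_1(L)$ would vanish'' conflates two unrelated obstructions. The actual source of nonvanishing is the affine-bundle extension class: expanding the transition functions of $L=p^*F\otimes[Y]$ near $Y$ one finds $e_k/e_j=1-\xi_{jk}\theta_j+o(\theta_j)$, so that under the isomorphism $N_{Y/X}^{-1}\cong(p|_Y)^*F$ the class $u_1(Y,X,L)$ is identified with $-\xi\in H^1(R,\mathcal{O}_R(F))$, which is nonzero by the choice of $\xi$ (such a class exists by Riemann--Roch since $g\ge 2$ and $\deg F=1$). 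Until you replace the elliptic-curve/non-torsion computation with this identification, the non-semipositivity argument has a genuine gap.

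A secondary remark: the implication ``$u_1\neq 0\Rightarrow L$ not semipositive'' is exactly Koike's Theorem 1.4, which you may cite rather than re-derive via the order-by-order curvature expansion; your sketch of that lemma is plausible but is not needed as new content. The nef-and-big part of your proposal is essentially sound once the geometry is corrected: $L\cdot Y=\deg F-\deg F=0$, $L\cdot\Gamma\ge 0$ for all other curves, and $L^2=1>0$ gives bigness.
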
 For Grauert’s example, we refer to [G, p.~365–366]. It will also be explained in Section 2.2. Conjecture 1.1 claims that this is an example of a line bundle on a projective surface that is nef and big, although not semipositive. Based on this conjecture, Koike showed that a line bundle on a projective surface that is constructed by modifying Grauert’s example is indeed nef and big, although not semipositive, as discussed in [Ko]. The main result of the present paper is as follows:
	\begin{theorem}
		Filip and Tosatti's conjecture is affirmative
	\end{theorem}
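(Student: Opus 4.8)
The plan is to work with the explicit surface $X$ and line bundle $L$ of Grauert's example recalled in Section 2.2: first establish nefness and bigness by a direct intersection computation on $X$, and then establish the failure of semipositivity by computing Koike's first obstruction class and checking that it does not vanish. The second part is where essentially all of the difficulty lies.

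For nefness and bigness, I would write $N^{1}(X)$ in terms of the fiber class $f$ of the ruling and the distinguished curve $C\subset X$ along which $\deg(L|_{C})=0$, and express $L$ in this basis. Since $X$ is (birationally) a ruled surface over an elliptic curve $E$, every irreducible curve on $X$ is $f$, or $C$, or a curve dominating $E$; for the last type the inequality $L\cdot D\ge 0$ reduces, via $C\cdot f=1$ and the (semi)stability of the rank-two bundle defining $X$, to an elementary numerical estimate, while the cases $D=f$ and $D=C$ are immediate. Once $L$ is nef, bigness follows from $L^{2}>0$, again immediate from the presentation of $L$; by the Hodge index theorem this also forces $C^{2}<0$, so $N_{C/X}$ has negative degree, and this negativity is the source of all the subtlety below.

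For the failure of semipositivity I argue by contradiction. Suppose $L$ carries a smooth Hermitian metric $h$ with semipositive Chern curvature. Restricting $h$ to a tubular neighborhood $V$ of $C$ and using $L\cdot C=0$, the restriction $h|_{C}$ is flat; expanding $h$ in the normal direction to $C$, its coefficients must satisfy a system of compatibility equations relative to the transition data of $X$ along $C$, and by Koike's theory (recalled from [Ko]) this system is solvable precisely when the first obstruction class $u_{1}(L)\in H^{1}(C,\mathcal{N})$ vanishes, where $u_{1}(L)$ is Koike's generalization of the Ueda class and $\mathcal{N}$ is the line bundle on $C$ built from $N_{C/X}$ and $L|_{C}$. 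It therefore suffices to compute $u_{1}(L)$ from Grauert's explicit gluing and to check that $u_{1}(L)\ne 0$. Concretely, I would pull back a finite open cover of $E$ to get two charts for $V$ whose overlaps carry the non-trivial multiplier of the non-torsion line bundle $N\in\mathrm{Pic}^{0}(E)$ encoded in $N_{C/X}$; choose local frames of $L$ adapted to the flat structure of $L|_{C}$; read off the \v{C}ech $1$-cocycle representing $u_{1}(L)$ from the first-order part of the comparison of these frames; and prove, by a Fourier-series computation on $E$, that this cocycle is not a coboundary, the point being that solving it would amount to solving a $\delbar$-type equation on $E$ with no solution because $N$ is non-torsion. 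This yields $u_{1}(L)\ne 0$, contradicting the existence of $h$, so $L$ is not semipositive, confirming Filip and Tosatti's conjecture.

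The step I expect to be the main obstacle is exactly this computation and the non-vanishing of $u_{1}(L)$. The delicate point is that the groups $H^{1}(C,N_{C/X}^{-n})$ housing the classical Ueda obstructions all vanish for degree reasons on $E$, so the ``formal linearizability'' that their vanishing records does not by itself decide semipositivity; one genuinely needs Koike's finer invariant, and one must track carefully how the non-torsion multiplier of $N$ propagates through the first-order normal expansion of the metric, since that is what in the end makes $u_{1}(L)$ non-zero. Choosing coordinates and frames in which the relevant cocycle is simultaneously explicit and visibly non-exact is the technical heart of the proof.
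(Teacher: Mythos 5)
Your high-level skeleton matches the paper's: prove nef and big by intersection theory on the ruled surface, then show non-semipositivity by computing Koike's first obstruction class and invoking \cite[Theorem 1.4]{Ko}. However, the concrete computation you sketch is aimed at the wrong surface, and this is a genuine gap rather than a stylistic difference. Grauert's example is \emph{not} built over an elliptic curve with a non-torsion flat multiplier: it is the compactification of an affine line bundle over a compact Riemann surface $R$ of genus $g\ge 2$, associated to a line bundle $F$ of degree $1$ and a nonzero extension class $\xi\in H^1(R,\mathcal{O}_R(F))$ (which exists precisely because $g\ge 2$, by Riemann--Roch; over an elliptic curve this group vanishes and the construction degenerates). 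The normal bundle of the section at infinity satisfies $N_{Y/X}\cong (p|_Y)^*F^{-1}$, of degree $-1$, so $Y$ is an exceptional curve, and the obstruction class lives in $H^1(Y,N_{Y/X}^{-1})\cong H^1(R,\mathcal{O}_R(F))$ --- a group that is \emph{nonzero}, not one that ``vanishes for degree reasons.'' Your proposed mechanism for non-vanishing (tracking a non-torsion multiplier $N\in\mathrm{Pic}^0(E)$ through a Fourier-series computation on $E$) belongs to a different family of examples (Ueda-type, with topologically trivial normal bundle, e.g.\ the Demailly--Peternell--Schneider surface, where $L$ is nef but not big); it cannot be carried out here and would not produce the required class.

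What the paper actually does at the crucial step is much more direct: writing $\theta_j=1/\eta_j$ and $e_k=p^*m_k\otimes v_k$, the transition function of $L$ expands as $e_k/e_j=1-\xi_{jk}\theta_j+o(\theta_j)$, so $u_1(Y,X,L)$ is represented by the cocycle $\{-\xi_{jk}\,d\theta_j\}$, which under $N_{Y/X}^{-1}\cong(p|_Y)^*F$ is identified with $-\xi\in H^1(R,\mathcal{O}_R(F))$; its non-vanishing is then immediate from the defining choice of $\xi\ne 0$ (any primitive $\{\varphi_j\}$ trivializing the cocycle would push down to a primitive of $\xi$ on $R$). Your nefness argument also leans on semistability of a rank-two bundle over an elliptic curve, whereas in the actual example nefness follows simply from $F$ being ample of degree $1$ (so $p^*F$ is nef), $[Y]\cdot\Gamma\ge 0$ for $\Gamma\ne Y$, and $L\cdot Y=\deg F-\deg F=0$; bigness is $L^2=1>0$. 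To repair your proof you would need to replace the elliptic-curve/Fourier analysis with the genus-$\ge 2$ setup and the identification of $u_1$ with the extension class $\xi$.
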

 The outline of the proof is as follows. First, we investigate the structure of Grauert’s example and study the line bundle $L$ introduced by Filip and Tosatti on this example. Then we compute its first obstruction class $u_1(Y, X,L)$, where $Y$ is a compact $(\mathrm{K\ddot{a}hler})$ submanifold of $X$. In [Ko], the defnition of $u_1(Y, X,L)$ was originally introduced as a generalization of the Ueda class in \cite{U}. Finally, we deduce that $L$ is not semipositive by applying Theorem 1.4 of [Ko].

The organazation of this paper is as follows. In Section 2.1, we recall some necessary definitions. In Section 2.2, we describe in detail the construction of Grauert's example. In Section 2.3, we introduce the first obstruction class in $u_1(Y,X,L)$ associated  with the tripe $(Y,X,L)$. In Section 3, we give the proof the main theorem.
	\vskip3mm
	{\bf Acknowledgment:} 
	The author would like to express his sincere gratitude to Professor Takayuki Koike for his invaluable guidance and continuous support throughout the
	preparation of this paper. He is also grateful to Professor Valentino Tosatti for his insightful comments on the paper. The author further thanks Jinichiro Tanaka for helpful comments and discussions. This work was supported by the JST spring program.

	\section{Preliminaries}
	\subsection{Terminology and Notation}
	In this subsection, we introduce some defnitions and notation in complex geometry. For the details, refer to [Dem2, Chapter V], [GH, Chapter 0] for example. In the present paper, we use the following notation for a holomorphic line bundle $L$ on a complex manifold $X$, submanifold $Y\subset X$, and an open subset $U\subset X$:
\begin{itemize}
		\item $\mathbb{C}$ : the field of complex numbers,
		\item $\mathbb{Z}$ : the field of real integers,
		\item ${\rm deg}\,L$ : the degree of the line bundle $L$,
		\item $\mathrm{dim}\,X$ : the complex dimension of $X$,
		\item $K_X$: the canonical line bundle of $X$,
		\item $N_{Y/X}$: the normal bundle of $Y$ in $X$,			 
		\item $L^{m}$: For an integer $m \in \mathbb{Z}$, we denote by $L^{\otimes m}$ the $m$-th tensor power of $L$. Note that $L^{-1}$ denotes the dual line bundle of $L$.
		\item $H^i(X, \mathcal{F})$ : the $i$-th sheaf cohomology group of $\mathcal{F}$ on $X$, where $\mathcal{F}$ is one of following:
		\item $\mathcal{O}_X$ defined by $\mathcal{O}_X(U)$:=\{all holomorphic functions on $U$\},
		\item $\mathcal{O}^*_X$ defined by $\mathcal{O}^*_X(U)$:=\{all nowhere vanishing holomorphic functions on $U$\},
		\item $\mathcal{O}_X(L)$ defined by $\mathcal{O}_X(L)(U)$ :=\{all holomorphic sections of $L$ on $U$\}.
	\end{itemize}		
	
\begin{definition}[{\it Positive/Semipositive}]
	A line bundle $L$ on $X$ is said to be {\it positive} (resp. {\it semipositive}) 
	if there exists a Hermitian metric $h$ on $L$ such that the Chern curvature form 
	$\Theta_{L,h}$ is positive (resp. semipositive) everywhere.
\end{definition}
	
	\begin{definition}[{\it Nef}]
		Let $X$ be a projective variety and let $L$ be a holomorphic line bundle on $X$. We say that $L$ is {\it nef} if for any algebraic curve $C \subset X$, $(L.C)=\int_{C} c_{1}(L) \ge 0$.
	\end{definition}
	
	\begin{definition}[{\it Big}]
		Let $L$ be a line bundle on a projective variety $X$. The line bundle $L$ is called {\it big} if $\limsup_{m \to \infty} \frac{\dim H^0(X, L^{m})}{m^n} > 0$, where $n={\rm dim}\, X$ is the dimension of the variety $X$.
	\end{definition}
	
	\begin{definition}[{\it Topologically trivial line bundle}]
		Let $X$ be a complex manifold. A line bundle $L$ on $X$ is called {\it topologically trivial} if it is $C^{\infty}$ isomorphic to the trivial bundle $X \times \mathbb{C} \to X$. Equivalently, the first Chern class of $L$ vanishes, i.e.\ $c_{1}(L) = 0\in H^2(X,\mathbb{Z})$.
	\end{definition}
	
	As is well known, if $X$ is a projective variety and $L$ is a holomorphic line bundle over $X$, the following implications hold:
	
\begin{tikzcd}[row sep=4.5em, column sep=4em,
	every node/.style={draw=black, thick, inner sep=4pt}]
	& \text{$L$: ample}
	\arrow[r, Leftrightarrow, "(1)" above]
	& \text{$L$: positive}
	\arrow[r, Rightarrow, "(2)" above]
	\arrow[d, Rightarrow, "(4)" left]
	& \text{$L$: semipositive}
	\arrow[r, Rightarrow, "\text{(3)}" above]
	& \text{$L$: nef} \\
	& & \text{$L$: big}
	& \text{$L$: topologically trivial}
	\arrow[u, Rightarrow, "(5)" right] &
\end{tikzcd}
	\begin{center}
	\textbf{Figure 1}
	\noindent
	\begin{enumerate}
		\item The equivalence between ampleness and positivity follows from Kodaira’s embedding theorem and  \cite[Example~3.14 ]{D1}.
		\item Positivity implies semipositivity by definition.
		\item Semipositivity implies nefness. See \cite[Proposition~6.10]{D1}.
		\item A positive line bundle is big. This is explained in \cite[Corollary~6.19]{D1}.
		\item The implication from topologically trivial to semipositive follows from Kashiwara’s theorem. See Theorem~2.5 below.
	\end{enumerate}
	
	\begin{theorem}[{\cite[Proposition~1, p.~584]{U}}]
	Let $X$ be a compact Kähler manifold, and let $L$ be a holomorphic line bundle over $X$. 
	If $L$ is topologically trivial, then it admits a unitary flat structure. That is, there exists a Hermitian metric $h$ on $L$ such that $\Theta_{L,h} = 0$ at every point of $X$.
\end{theorem}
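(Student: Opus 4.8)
The plan is to produce the metric demanded by the theorem by conformally correcting an arbitrary one, the correction term being furnished by the $\ddbar$-lemma on the compact Kähler manifold $X$. Concretely, I would fix any smooth Hermitian metric $h_0$ on $L$. By Chern--Weil theory its Chern curvature $\Theta_{L,h_0}$ is a $d$-closed $(1,1)$-form, and the de Rham class $\left[\frac{i}{2\pi}\Theta_{L,h_0}\right]\in H^2(X,\mathbb{R})$ is the image of $c_1(L)\in H^2(X,\mathbb{Z})$ under the change-of-coefficients map. Since $L$ is topologically trivial, $c_1(L)=0$, so $\frac{i}{2\pi}\Theta_{L,h_0}$ is a real, $d$-exact $(1,1)$-form on $X$.

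Next I would invoke the $\ddbar$-lemma: on a compact Kähler manifold, a real $d$-exact $(1,1)$-form $\alpha$ may be written $\alpha=i\,\ddbar f$ for some real-valued smooth function $f$. Indeed, its harmonic component vanishes because its de Rham class is zero, and the Kähler (Hodge) identities then allow the resulting $d$-exact term to be replaced by a $\ddbar$-exact one, with the potential taken real because $\alpha$ is real. Applying this to $\alpha=\frac{i}{2\pi}\Theta_{L,h_0}$ and absorbing the numerical constant, I obtain a real smooth function $f$ on $X$ with $\Theta_{L,h_0}=\ddbar f$.

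Finally I would set $h:=e^{\pm f}h_0$, which is again an honest Hermitian metric since $e^{\pm f}>0$. A conformal change of metric shifts the local Chern weight by an additive real function, so $\Theta_{L,h}=\Theta_{L,h_0}\mp\ddbar f$; choosing the sign according to one's curvature convention yields $\Theta_{L,h}=0$ at every point of $X$. The Chern connection of $h$ is then simultaneously unitary, being the Chern connection of a Hermitian metric, and flat, so $h$ is the desired unitary flat structure and the theorem follows.

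The only step that is not routine bookkeeping is the $\ddbar$-lemma with a real potential, which is exactly where the Kähler hypothesis on $X$ is essential: on a general compact complex manifold a de-Rham-exact curvature form need not be $\ddbar$-exact, and the conclusion genuinely fails. As an alternative route one could argue cohomologically, using the exponential sequence to identify $\ker\!\left(c_1\colon\mathrm{Pic}(X)\to H^2(X,\mathbb{Z})\right)$ with $\mathrm{Pic}^0(X)=H^1(X,\mathcal{O}_X)/H^1(X,\mathbb{Z})$, then using Hodge theory to identify $H^1(X,\mathbb{R})$ with $H^1(X,\mathcal{O}_X)$ compatibly with the integral lattices, so that $\mathrm{Pic}^0(X)\cong H^1(X,\mathbb{R})/H^1(X,\mathbb{Z})$ is precisely the group of unitary flat line bundles with vanishing Chern class; the conformal-correction argument above, however, is shorter and self-contained.
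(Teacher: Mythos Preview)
Your argument is correct and is the standard proof of this fact: pick any Hermitian metric, use topological triviality to make the curvature $d$-exact, apply the $\ddbar$-lemma (which is exactly where compact K\"ahler enters), and conformally correct. Your bookkeeping with the factor of $i$ and the sign in the conformal change is fine, and the alternative route through the exponential sequence and $\mathrm{Pic}^0(X)$ that you sketch at the end is also valid.

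There is nothing to compare against here: the paper does not give its own proof of this theorem. It is stated as Theorem~2.5 purely as a citation to \cite[Proposition~1, p.~584]{U} (and attributed in the text to Kashiwara), and is used only to justify implication~(5) in Figure~1, namely that a topologically trivial line bundle on a compact K\"ahler manifold is semipositive. So your proposal supplies a self-contained proof where the paper simply quotes the literature.
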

\end{center}
	\subsection{Grauert's example}
In this subsection, we introduce the construction in Grauert’s example following \cite[p. 365--366]{G}, which will be used in later calculations, together with some related results.
	Let $R$ be a compact Riemann surface of genus $g \geq 2$, and let $F$ be a holomorphic line bundle over $R$ with ${\rm deg}\,F=1$. Assume that $H^1(R, \mathcal{O}_R(F)) \neq 0$.
	\begin{lemma}
		There exists a line bundle $F$ of degree $1$ such that $H^1(R, \mathcal{O}_R(F)) \ne 0$.
	\end{lemma}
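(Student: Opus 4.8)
The plan is to exhibit such an $F$ explicitly: fix any point $P \in R$ and set $F := \mathcal{O}_R(P)$, the line bundle associated to the divisor $P$. Then $\deg F = 1$ by construction, so it remains only to verify that $H^1(R, \mathcal{O}_R(F)) \neq 0$.

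The computation proceeds via Serre duality. On the compact Riemann surface $R$ one has a natural isomorphism $H^1(R, \mathcal{O}_R(F)) \cong H^0(R, \mathcal{O}_R(K_R \otimes F^{-1}))^{*}$, so it suffices to show that the line bundle $K_R \otimes F^{-1}$ --- which is $K_R$ twisted down by the point $P$, of degree $2g-3$ --- carries a nonzero global holomorphic section. For this I use the evaluation exact sequence $0 \to \mathcal{O}_R(K_R \otimes F^{-1}) \to \mathcal{O}_R(K_R) \to \mathbb{C}_P \to 0$, where $\mathbb{C}_P$ is the skyscraper sheaf at $P$ and the second map is evaluation at $P$; passing to the associated long exact cohomology sequence gives the bound $h^0(R, \mathcal{O}_R(K_R \otimes F^{-1})) \ge h^0(R, \mathcal{O}_R(K_R)) - 1 = g - 1$, using that $h^0(R, K_R) = g$ by the definition of the genus. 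Since $g \ge 2$, this is $\ge 1$, and therefore $H^1(R, \mathcal{O}_R(F)) \neq 0$.

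Alternatively, when $g \ge 3$ one need not choose $F$ carefully at all: for any line bundle $F$ of degree one, Riemann--Roch gives $h^1(R, \mathcal{O}_R(F)) = h^0(R, \mathcal{O}_R(F)) - \deg F - 1 + g = h^0(R, \mathcal{O}_R(F)) + g - 2 \ge g - 2 \ge 1$. Only the borderline case $g = 2$ actually forces $F$ to be effective, and there the choice $F = \mathcal{O}_R(P)$ settles it by the same count.

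There is no serious obstacle in this argument; it is a direct consequence of Serre duality (or of Riemann--Roch) together with the elementary fact that imposing a single point condition drops $h^0$ by at most one. The only subtlety worth noting is that the hypothesis $g \ge 2$ (equivalently $\deg K_R = 2g-2 \ge 1$) is genuinely used to guarantee the existence of the section; in particular for $g = 1$ every degree-one line bundle has $h^1 = 0$, so the statement is sharp.
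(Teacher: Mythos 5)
Your proposal is correct. Your primary argument runs through Serre duality: you reduce $H^1(R,\mathcal{O}_R(F))\ne 0$ to the existence of a nonzero section of $K_R\otimes F^{-1}$, which you obtain from the evaluation sequence $0\to \mathcal{O}_R(K_R\otimes F^{-1})\to \mathcal{O}_R(K_R)\to \mathbb{C}_P\to 0$ and the count $h^0(K_R-P)\ge g-1\ge 1$. The paper instead works on the $F$ side directly: it notes that $F=[\{p\}]$ is effective, so $h^0(F)\ge 1$, and then Riemann--Roch gives $h^0-h^1=2-g\le 0$, forcing $h^1\ge h^0\ge 1$. The two arguments are dual to each other and of essentially the same depth; yours makes the geometric content (a canonical divisor through $P$) visible, while the paper's is a one-line numerical deduction. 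Your closing "alternative" via Riemann--Roch, together with the observation that effectivity of $F$ handles the borderline case $g=2$, is in fact exactly the paper's proof. One tiny quibble: in your alternative you write $h^1 = h^0 - \deg F - 1 + g$, i.e.\ $h^0-h^1=\deg F+1-g$, which has the genus term with the wrong placement relative to the standard form $h^0-h^1=\deg F+1-g$ --- on inspection your formula is actually the same identity, so the count $h^1=h^0+g-2$ is right; no error there.
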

	
	\begin{proof}
		Take a point $p \in R$ and set $F = [\{p\}]$, the line bundle associated with the divisor $\{p\}$.  
		By definition,
		\[
		\Gamma(R, \mathcal{O}_R([\{p\}])) = 
		\{ f : R \to \mathbb{C} \text{ meromorphic } \mid \mathrm{div}(f) + \{p\} \ge 0 \}
		\supseteq \mathbb{C},
		\]
		so $\dim H^0(R, \mathcal{O}_R([\{p\}])) \ge 1$.  
		By the Riemann–Roch theorem,
		\[
		\dim H^0(R, \mathcal{O}_R([\{p\}])) 
		- \dim H^1(R, \mathcal{O}_R([\{p\}])) 
		= 1 - g + \deg(F) = 2 - g.
		\]
		Since $g \ge 2$, hence
		\[
		\dim H^1(R, \mathcal{O}_R([\{p\}])) 
		\ge \dim H^0(R, \mathcal{O}_R([\{p\}])).
		\]
		As $\dim H^0(R, \mathcal{O}_R([\{p\}])) \ge 1$, it follows that $H^1(R, \mathcal{O}_R(F)) \ne 0$.
	\end{proof}

Take a sufficiently fine finite open covering $\mathcal{U} = \{ U_j \}_{j\in\Lambda}$ of $R$, and choose a Čech cohomology class
\[
		[\xi] = [\{ \left(U_{jk},\xi_{jk} m_j\right) \}]\in \check{H}^1(\mathcal{U},\mathcal{O}_R(F))
	\]
	such that $[\xi] \neq 0$. Here, each $\xi_{jk} : U_{jk} \to \mathbb{C}$ is a holomorphic function, and $\{ m_j \}$ is a system of local frames of $F$ satisfying
\[
		m_j = a_{jk}^{-1} m_k \quad \text{on } U_j \cap U_k,
\]
	where $a_{jk}$ is the transition function of $F$. Let $\Psi_j: \pi^{-1}(U_j)\to U_{j}\times\mathbb{C}$ be a local trivializations of the total space of the line bundle $F$ over each open set $U_{j}$, where $\pi$ is the projection map. For any $j, k$ and $(x_j, \eta_j) \in U_j \times \mathbb{C}$, $(x_k, \eta_k) \in U_k \times \mathbb{C}$, the transition $a_{jk}$ are given by
	\[
		\Psi_j^{-1}(x_j, \eta_j) = \Psi_k^{-1}(x_k, \eta_k)
		\quad \text{if and only if} \quad
		\begin{cases}
			x_j = x_k \\
			\eta_j = a_{jk}(x) \eta_k
		\end{cases},
\]
	We now construct an affine line bundle $p: A \to R$ associated to the pair $(F, \xi)$. For each $j$, define $V_j:= U_j \times \mathbb{C}$, and define
	$A = A_{F, \xi} := \left( \bigsqcup_{j=1}^r V_j \right) \Big/ \sim$, where for $(x_j, \eta_j) \in U_j \times \mathbb{C}$ and $(x_k, \eta_k) \in U_k \times \mathbb{C}$, we declare
	\[
		(x_j, \eta_j) \sim (x_k, \eta_k) \quad \Longleftrightarrow \quad
		\begin{cases}
			x_j = x_k \\
			\eta_j = a_{jk}(x_j) \eta_k + \xi_{jk}(x_j)
		\end{cases}.
	\]
This defines a holomorphic affine line bundle $A$ over $R$ with transition data determined by the pair $(F, \xi)$. To compactify $A$, we add the “section at infinity” $Y \subset X$, thereby obtaining a compact complex surface $X$. Let $[Y]$ be the line bundle associated to the divisor $Y \subset X$, and define the line bundle $L := p^*F \otimes [Y]$, where $p: X \to R$ is the natural projection. Let $W_j = p^{-1}(U_j) \cap Y = \{ (x_j, \eta_j) \mid \eta_j = +\infty \} = \{ (x_j, \theta_j) \mid \theta_j = 0 \},$ where \( \theta_j = \frac{1}{\eta_j} \). The domain of \( \theta_j \) is (\( V_j \cup Y) \setminus \{ \eta_j = 0 \} \).

	\begin{definition}
		We define a map 
		\[
		\Phi: N_{Y/X} \longrightarrow (p|_{Y})^{*}F^{-1}
		\]
		as follows. For each $W_{j}$ and any point $x \in W_{j}$, we define
		\[
		\Phi_{x}: (N_{Y/X})_{x} \longrightarrow \bigl((p|_{Y})^{*}F^{-1}\bigr)_{x} 
		\text{by}\,\, \Phi_{x}\!\left(f_{j}(x)\cdot\frac{\partial}{\partial\theta_{j}}(x)\right)
		= f_{j}(x)\cdot (p|_{Y})^{*}n_{j}.
		\]
	\end{definition}
	
	\begin{claim}
		The pointwise definition of $\Phi_{x}$ is independent of the choice of local coordinates and defines a global morphism.
	\end{claim}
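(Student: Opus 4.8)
The plan is to verify that the locally defined maps $\Phi_x$ patch together, which reduces to a compatibility check of the local frames $\partial/\partial\theta_j$ of $N_{Y/X}$ against the local frames $n_j$ of $(p|_Y)^*F^{-1}$ on overlaps $W_j\cap W_k$. First I would write down the transition functions explicitly. On $V_j\cap V_k$ we have $\eta_j = a_{jk}(x)\eta_k + \xi_{jk}(x)$, and passing to the fiber coordinate near $Y$ via $\theta = 1/\eta$ gives
\[
\theta_j = \frac{1}{\eta_j} = \frac{1}{a_{jk}(x)\eta_k + \xi_{jk}(x)} = \frac{\theta_k}{a_{jk}(x) + \xi_{jk}(x)\theta_k}.
\]
Restricting to $Y = \{\theta = 0\}$ and differentiating, one finds $\left.\dfrac{\partial\theta_j}{\partial\theta_k}\right|_{\theta_k=0} = a_{jk}(x)^{-1}$, so the normal bundle $N_{Y/X}$ has transition functions $a_{jk}^{-1}$ on $Y$ (with respect to the frames $\partial/\partial\theta_j$), i.e.\ $N_{Y/X} \cong (p|_Y)^*F^{-1}$ as line bundles; this is the classical computation behind Grauert's example. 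Next I would recall that $(p|_Y)^*F^{-1}$ has transition functions $a_{jk}^{-1}$ precisely with respect to the frames $n_j := (p|_Y)^*m_j$ dual to the $m_j$, matching the relation $m_j = a_{jk}^{-1}m_k$.

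With these two computations in hand, the patching check is immediate: a local section $f_j(x)\,\partial/\partial\theta_j$ of $N_{Y/X}$ over $W_j\cap W_k$ satisfies $f_j = a_{jk}^{-1} f_k$, and its image $f_j(x)\,(p|_Y)^*n_j$ satisfies the same transformation rule $f_j\, n_j = a_{jk}^{-1} f_k\, a_{kj}\, n_k \cdot a_{jk}^{-1}$... more precisely, since $n_j = a_{jk}^{-1} n_k$ as well, we get $f_j n_j = (a_{jk}^{-1}f_k)(a_{jk}^{-1}n_k)$, which is consistent because the left-hand expressions in both bundles transform identically. Hence $\Phi_j = \Phi_k$ on the overlap, so the $\Phi_x$ glue to a well-defined global bundle morphism $\Phi : N_{Y/X} \to (p|_Y)^*F^{-1}$; in fact it is an isomorphism since it is a nonzero map of line bundles given on frames by the identity. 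I would also note independence of the choice of trivialization $\Psi_j$: any two choices differ by an element of $\mathcal{O}^*(U_j)$, which scales $\eta_j$, hence $\theta_j$, hence both $\partial/\partial\theta_j$ and the induced $n_j$ by the same factor, leaving $\Phi$ unchanged.

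The only mildly delicate point — and the one I would present most carefully — is the chain-rule computation of $\partial\theta_j/\partial\theta_k$ along $Y$, because one must be sure to differentiate in the fiber direction at $\theta_k = 0$ and check that the affine term $\xi_{jk}$ drops out to first order (it contributes at order $\theta_k$, hence not to the normal-bundle transition but rather, as Koike observes, to the obstruction class $u_1$). Everything else is bookkeeping with cocycle relations. So the proof of the Claim is: (i) compute the normal bundle transition functions from the coordinate change $\theta_j = \theta_k/(a_{jk} + \xi_{jk}\theta_k)$; (ii) observe they agree with those of $(p|_Y)^*F^{-1}$; (iii) conclude the local maps $\Phi_x$ respect the gluing and assemble into a global isomorphism, independent of all choices.
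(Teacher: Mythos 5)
Your overall strategy is the same as the paper's: compute $\left.\frac{\partial\theta_j}{\partial\theta_k}\right|_{\theta_k=0}=a_{jk}^{-1}$ from $\theta_j=\theta_k/(a_{jk}+\xi_{jk}\theta_k)$, deduce $\frac{\partial}{\partial\theta_j}=a_{jk}\cdot\frac{\partial}{\partial\theta_k}$, so that a section $s=f_j\,\frac{\partial}{\partial\theta_j}=f_k\,\frac{\partial}{\partial\theta_k}$ has coefficients with $f_k=a_{jk}f_j$, and then match this against the frames of $(p|_Y)^*F^{-1}$. That first half is correct and is exactly the paper's computation, including the observation that the affine term $\xi_{jk}$ only enters at order $\theta_k$.

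However, the gluing verification itself---which is the entire content of the claim---is carried out with the wrong frame relation, and as written the step fails. You set $n_j:=(p|_Y)^*m_j$ ``dual to the $m_j$'' and then use $n_j=a_{jk}^{-1}n_k$. But $(p|_Y)^*m_j$ is a frame of $(p|_Y)^*F$, not of $(p|_Y)^*F^{-1}$; the frames of $F^{-1}$ dual to the $m_j$ satisfy $n_j=a_{jk}\,n_k$ (since $m_j=a_{jk}^{-1}m_k$), which is the relation the paper imposes on $(p|_Y)^*n_j$. With your relation the two local expressions do \emph{not} agree: $f_j n_j=(a_{jk}^{-1}f_k)(a_{jk}^{-1}n_k)=a_{jk}^{-2}f_k n_k\neq f_k n_k$, so $\Phi$ would not be well defined, and the displayed ``transformation rule'' $f_j n_j=a_{jk}^{-1}f_k\,a_{kj}\,n_k\cdot a_{jk}^{-1}$ is not a computation that establishes equality of anything. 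Your closing remark that ``the coefficients transform identically'' is the right idea (coefficients of sections of $N_{Y/X}$ and of $(p|_Y)^*F^{-1}$ both satisfy $g_j=a_{jk}^{-1}g_k$), but to turn it into a proof you must use the correct frame relation: with $n_j=a_{jk}n_k$ and $f_k=a_{jk}f_j$ one gets $f_k n_k=a_{jk}f_j\cdot a_{jk}^{-1}n_j=f_j n_j$, which is precisely the consistency check in the paper's proof. The extra observation about independence of the choice of trivializations $\Psi_j$ is fine, but it does not repair the overlap check.
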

	
	\begin{proof}
		Since $N_{Y/X} = [Y]|_{Y}$, we have the following identity on $W_{j}\cap W_{k}$
		\[
		\left.\frac{\partial\theta_{j}}{\partial\theta_{k}}
		= \frac{a_{jk}(x_{j})}{\bigl(a_{jk}(x_{j})+\xi_{jk}\theta_{k}\bigr)^{2}}
		\right|_{\theta_{k}=0}
		= \frac{1}{a_{jk}(x_{j})}.
		\]
		Hence,
		\[
		\frac{\partial}{\partial\theta_{j}}
		= \frac{\partial\theta_{k}}{\partial\theta_{j}}\cdot\frac{\partial}{\partial\theta_{k}}
		= a_{jk}(x_{j})\cdot\frac{\partial}{\partial\theta_{k}}.
		\]
	Let $\{(p|_{Y})^{*}n_{j}\}$ be a system of local frames of $(p|_{Y})^{*}F^{-1}$ with the relation
		\[
		(p|_{Y})^{*}n_{j} = a_{jk}\cdot (p|_{Y})^{*}n_{k}.
		\]
		On each $W_{j}$, any section $s$ of $N_{Y/X}$ can be written as
		\[
		s = f_{j}\cdot \frac{\partial}{\partial\theta_{j}},
		\]
		where $f_j$ is a    holomorphic function on $W_j\cap W_k$, it follows that
		\[
		s = f_{k}\cdot \frac{\partial}{\partial\theta_{k}}
		= f_{j}\cdot \frac{\partial}{\partial\theta_{j}}
		= f_{j}a_{jk}\cdot \frac{\partial}{\partial\theta_{k}}.
	    \]
		Now we have
		\[
		\Phi_{x}\left(f_{j}(x)\cdot\frac{\partial}{\partial\theta_{j}}(x)\right)
		= f_{j}(x)\cdot (p|_{Y})^{*}n_{j}(x),
		\]
		and
		\[
		\Phi_{x}\!\left(f_{k}(x)\cdot\frac{\partial}{\partial\theta_{k}}(x)\right)
		= f_{k}(x)\cdot (p|_{Y})^{*}n_{k}(x)
		= f_{j}(x)a_{jk}\cdot (p|_{Y})^{*}n_{k}(x)
		= f_{j}(x)\cdot (p|_{Y})^{*}n_{j}(x).
		\]
		
		Thus the definition is consistent across overlaps, and $\Phi$ is well defined globally.
	\end{proof}
	\begin{claim}[{\cite[example(d), p. 365--366]{G}}]
		$N_{Y/X} \cong(p|_{Y})^{*}F^{-1}$
	\end{claim}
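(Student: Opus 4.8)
The plan is to establish the isomorphism $N_{Y/X} \cong (p|_Y)^*F^{-1}$ by showing that the map $\Phi$ constructed in Definition 2.8 is an isomorphism of holomorphic line bundles on $Y$. Since Claim 2.9 already shows that $\Phi$ is a well-defined global morphism of holomorphic line bundles, and both $N_{Y/X}$ and $(p|_Y)^*F^{-1}$ are line bundles (i.e. locally free sheaves of rank one) over the same base $Y$, it suffices to check that $\Phi$ is an isomorphism fibrewise, or equivalently that on each chart $W_j$ the morphism $\Phi$ is given by a nowhere-vanishing holomorphic function.

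First I would unwind the local description: on $W_j$, the normal bundle $N_{Y/X}$ is locally free with frame $\partial/\partial\theta_j$, and $(p|_Y)^*F^{-1}$ is locally free with frame $(p|_Y)^*n_j$. By the very definition of $\Phi_x$, we have $\Phi\bigl(\partial/\partial\theta_j\bigr) = (p|_Y)^*n_j$, so in these frames $\Phi$ is represented by the constant function $1$, which is certainly nowhere vanishing and holomorphic. Hence $\Phi$ is a local isomorphism on each $W_j$, and since $\{W_j\}$ covers $Y$, it is a global isomorphism. The compatibility of these local pictures is exactly what Claim 2.9 verified, using $N_{Y/X} = [Y]|_Y$ together with the computation $\partial\theta_j/\partial\theta_k\big|_{\theta_k = 0} = 1/a_{jk}(x_j)$, which matches the transition function $a_{jk}$ of $(p|_Y)^*F^{-1} = ((p|_Y)^*F)^{-1}$.

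I would therefore structure the proof as follows: recall that $N_{Y/X} \cong [Y]|_Y$ by the adjunction-type identification for the normal bundle of a smooth divisor; identify the transition functions of $N_{Y/X}$ in the frames $\{\partial/\partial\theta_j\}$ as $\{a_{jk}(x_j)\}$ via the chain rule computation already carried out in Claim 2.9; observe that these coincide with the transition functions of $(p|_Y)^*F^{-1}$ (since $F$ has transition functions $a_{jk}$, its dual and pullback have transition functions $a_{jk}$ as well, in the conventions fixed above where $m_j = a_{jk}^{-1}m_k$); and conclude that the two line bundles are isomorphic, with $\Phi$ an explicit isomorphism. An alternative, essentially equivalent, phrasing is to simply note that $\Phi$ is a morphism of line bundles which is an isomorphism on every fibre (being multiplication by $1$ in suitable local frames), hence an isomorphism of line bundles.

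The main obstacle — and it is a minor one — is bookkeeping with conventions: one must be careful that the transition functions of the normal bundle, read off from the coordinate change $\theta_j \mapsto \theta_k$ at $\theta_k = 0$, are indeed $a_{jk}(x_j)$ rather than their inverses, and that this is consistent with the chosen convention $(p|_Y)^*n_j = a_{jk}\,(p|_Y)^*n_k$ for the frames of $(p|_Y)^*F^{-1}$. Once these sign/inverse conventions are pinned down exactly as in Claim 2.9, the isomorphism is immediate and no further computation is needed; in particular there is no global obstruction to patching, precisely because the affine structure encoded by $\xi$ disappears upon restriction to $\theta_k = 0$, leaving only the linear transition data of $F$.
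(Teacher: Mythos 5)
Your proposal is correct and follows essentially the same route as the paper: the paper's proof simply asserts that the map $\Phi$ (whose well-definedness was checked by matching $\partial/\partial\theta_j = a_{jk}\,\partial/\partial\theta_k$ with $(p|_Y)^*n_j = a_{jk}\,(p|_Y)^*n_k$) provides the isomorphism. Your observation that $\Phi$ is multiplication by $1$ in the local frames, hence a fibrewise isomorphism, just makes explicit the detail the paper leaves implicit.
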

	\begin{proof}
The map $\Phi$ provides an isomorphism between them.
		\end{proof}
	\begin{remark}
	By \cite[Satz~8, p.~353]{G}, if $M$ is a compact Kähler manifold, and \( A \subset M \) is a compact Kähler submanifold whose the normal bundle \( N_{A/M} \) negative, then \( A \) is exceptional. Since, by Claim~2.9 we already know that ${\rm deg}\,N_{Y/X} = {\rm deg}\,F^{-1} = -1$, it follows that $Y$ is exceptional; that is, $X$ admits a proper holomorphic map onto a complex space $Z$. Furthermore, according to \cite[Example~(d),  p.~365--366]{G}, the resulting space $Z$ is not a projective algebraic variety.
\end{remark}

\begin{remark}
In the original Grauert example, the compact complex surface $X$ is obtained as a compactification of an affine line bundle $A$ over a compact Riemann surface $R$, associated with a holomorphic line bundle $F$ of degree 1 and a non-trivial class in $H^{1}(R,\mathcal{O}_R(F))$, by adding an infinity section $Y$. In Koike’s modified Grauert example, one instead considers an affine line bundle associated with the canonical bundle $K_R$ and a non-trivial class in $H^{1}(R,K_R)$, whose natural compactification yields a compact complex surface $X$, which is a ruled surface in the birational sense, with the infinity section $Y$. Koike’s approach is based on an explicit construction: using the \v{C}ech--Dolbeault correspondence, he constructs a specific non-trivial class $\xi \in H^{1}(R,K_R)$ and shows that the first obstruction class $u_1(Y,X,L)$ coincides with this class.
In contrast, in the original Grauert setting, we start from the fact that $H^{1}(R,\mathcal{O}_R(F))$ is non-trivial. Rather than constructing a cohomology class, we take an arbitrary non-zero element $\xi \in H^{1}(R,\mathcal{O}_R(F))$ to define the associated affine line bundle, and show that the non-vanishing of the first obstruction class $u_1(Y,X,L)$ is equivalent, via $N_{Y/X}^{-1}\cong (p|_{Y})^{*}F$, to the non-triviality of $\xi$.
\end{remark}

	\subsection{The first obstruction class}
As introduced in \cite{Ko}, consider a complex manifold $X$ of dimension $d+r$ and a compact K\"ahler submanifold $Y \subset X$ of dimension $d$ ($d, r \ge 1$). Suppose that $L$ is a holomorphic line bundle on $X$ such that the restriction $L|_Y$ is unitary flat. In \cite{Ko}, $u_{1}(Y,X,L)$ was originally defined as a generalization of the Ueda class in \cite{U}, so that when $c_1(N_{Y/X})=0$ and $L=[Y]$, Ueda's first obstruction class can be identified with $u_1(Y,X,L)$. To make the paper self-contained, we describe it in this subsection. Let $X$ be a complex manifold of dimension $n\in\mathbb{Z}_{>0}$, and let  $Y \subset X$ be a compact Kähler submanifold of codimension $r\in\mathbb{Z}_{>0}$. Let $L$ be a holomorphic line bundle on $X$ such that $L|_Y$ is topologically trivial. Take a sufficiently fine open covering $\{V_j\}$ of a neighborhood of $Y$ in $X$, and a sufficiently fine small Stein open sbuset that $W_j$ such that $V_j\cap Y=W_j$. Here $x_j$ are coordinates on $W_j$, and $\theta_j=(\theta_j^1,\theta_j^2,....,\theta_j^r) \in \mathbb{C}^r$ are defining functions of $Y$. Consider the case of $r=1$($\theta_{j}=\theta_{j}^{1})$. Let $\{e_j\}$ be local frames of $L$ over $\{V_j\}$ such that, on the intersection $W_{jk} = W_j \cap W_k$, the frame transforms as
	\[
		t_{jk}^{-1} e_k = e_j \quad \text{with} \quad t_{jk} \in U(1).
	\]
	Then the transition function between local trivializations of $L$ satisfies (on $V_{jk}$)
	\[
		\frac{t_{jk}^{-1} e_k}{e_j} = 1 +f_{kj,1}(x_j)\cdot\theta_{j}+ O(|\theta_{j}|^2).
	\]
 The first obstruction class is the class
	\[
		u_1(Y, X, L) :=\left[\left\{\left(W_{jk}, f_{kj,1}(x_j)\cdot d\theta_j\right)\right\} \right]  \in \check{H}^1(\{W_j\}, \mathcal{O}_{Y}(N^{-1}_{Y/X})),
	\]
	where $f_{kj,1}(x_j)$ is the coefficients in the expansion of ${e_k}/{e_j}$, this defines an element of the Čech cohomology group $\check{H}^1(\{W_j\}, \mathcal{O}_{Y}(N^{-1}_{Y/X}))$, which is called the first obstruction class $u_1(Y, X, L)$.
	
	\begin{theorem} [{\cite[Lemma 3.1]{Ko}}]
		The collection $\{(W_{jk},f_{jk,1}(x_j)\cdot d\theta_{j})\}$ satisfies the 1-cocycle condtion. Moreover, the definition of $u_1(Y,X,L)$ does not depend on the choice of the local coordinates $\theta_{j}$, the local frames $\{e_j\}$, or the open covering $\{V_j\}$ of $Y$. 
	\end{theorem}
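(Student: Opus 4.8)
The plan is to recast the local data $f_{kj,1}(x_j)\,d\theta_j$ intrinsically and then read off every assertion from formal properties of the logarithmic differential and the \v{C}ech coboundary. By Theorem~2.5, the topological triviality of $L|_Y$ together with the Kähler hypothesis on $Y$ guarantees that admissible frame systems $\{e_j\}$ as above exist; I fix one, and on $V_{jk}$ I set $\phi_{jk} := t_{jk}^{-1}\,(e_k/e_j)$. This is a nowhere-vanishing holomorphic function with $\phi_{jk}|_Y\equiv 1$, and its Taylor expansion in $\theta_j$ is $1+f_{kj,1}(x_j)\theta_j+O(|\theta_j|^2)$. Choosing the branch of $\log\phi_{jk}$ that vanishes along $Y$ (legitimate after shrinking the $V_j$), the holomorphic $1$-form $d(\log\phi_{jk})$ annihilates $TY$ at points of $Y$, because $\log\phi_{jk}|_Y\equiv 0$; hence $d(\log\phi_{jk})|_Y$ is a holomorphic section of the conormal bundle $N^{-1}_{Y/X}$ over $W_{jk}$, and expanding the logarithm shows this section is exactly $f_{kj,1}(x_j)\,d\theta_j$. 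Thus $\sigma_{jk}:=f_{kj,1}(x_j)\,d\theta_j = d(\log\phi_{jk})|_Y$ has a description that does not mention $\theta_j$ at all, which already gives the independence of the defining function.

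For the cocycle condition I would observe that on every nonempty triple overlap $\phi_{jk}\,\phi_{kl}\,\phi_{lj}=1$: the ratios $e_k/e_j$ are the transition functions of $L$, so $\{e_k/e_j\}$ is a multiplicative $1$-cocycle, while $\{t_{jk}\}$ is a $U(1)$-valued $1$-cocycle precisely because it records the unitary flat structure on $L|_Y$; multiplying, the two contributions cancel. Applying $d\log(\,\cdot\,)$ (with branches vanishing on $Y$, so that the logarithms themselves add to $0$, not merely to a constant in $2\pi i\mathbb{Z}$) and restricting to $Y$ yields $\sigma_{jk}+\sigma_{kl}+\sigma_{lj}=0$ in $\mathcal{O}_Y(N^{-1}_{Y/X})(W_{jkl})$, and the same step applied to $\phi_{jk}\phi_{kj}=1$ gives $\sigma_{kj}=-\sigma_{jk}$. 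This is exactly the asserted $1$-cocycle condition.

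Independence of the frames is the same bookkeeping one step further: any two admissible systems differ by $\tilde e_j=h_je_j$ with $h_j$ nowhere-vanishing holomorphic on $V_j$, and writing $\hat h_j:=h_j/(h_j|_Y)$ so that $\hat h_j|_Y\equiv 1$, a short computation using $\tilde t_{jk}=(h_k|_Y/h_j|_Y)\,t_{jk}$ shows the $h_j|_Y$-factors cancel and $\tilde\phi_{jk}=(\hat h_k/\hat h_j)\,\phi_{jk}$; taking $d\log(\,\cdot\,)|_Y$ then gives $\tilde\sigma_{jk}-\sigma_{jk}=\tau_k-\tau_j$ with $\tau_j:=d(\log\hat h_j)|_Y\in\mathcal{O}_Y(N^{-1}_{Y/X})(W_j)$, a \v{C}ech coboundary, so the class is unchanged; in particular the freedom of replacing the flat frames $e_j|_Y$ by $c_j\,e_j|_Y$ with $c_j\in U(1)$ is a special case. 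For independence of the covering, restricting $\{\sigma_{jk}\}$ along any refinement (with correspondingly refined $\{W_j\}$ and freely re-chosen $\theta_j$, $e_j$, which by the previous steps do not affect the class) represents the image of $u_1$ under the refinement map, and since every $W_j$ is Stein, Leray's theorem identifies $\check{H}^1(\{W_j\},\mathcal{O}_Y(N^{-1}_{Y/X}))$ with $H^1(Y,\mathcal{O}_Y(N^{-1}_{Y/X}))$, so the class is intrinsic. The only point that will demand care is the very first one — recognizing $f_{kj,1}(x_j)\,d\theta_j$ as the conormal component $d(\log\phi_{jk})|_Y$ of an intrinsic object; once that is in hand the cocycle and independence statements become purely formal, and the compactness and Kähler hypotheses enter only through Theorem~2.5.
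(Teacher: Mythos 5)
Your route is genuinely different from the paper's: the paper gives no argument for this statement at all, but simply defers to Koike's Lemma~3.1 (adding only that the cocycle condition in the concrete example is rechecked in Claim~3.3), whereas you give a self-contained proof by recognizing $f_{kj,1}(x_j)\,d\theta_j$ as the intrinsic object $d(\log\phi_{jk})|_Y$ with $\phi_{jk}=t_{jk}^{-1}e_k/e_j$. That identification, the cocycle identity coming from $\phi_{jk}\phi_{kl}\phi_{lj}=1$, the independence of the defining functions $\theta_j$, and the covering independence via Leray's theorem for the Stein covering are all correct; for the cocycle step you do not even need to worry about branches, since $d\log\phi=d\phi/\phi$ is defined outright and additive under multiplication, and $d$ of a locally constant function vanishes.

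The one step that needs repair is frame independence. As written, $\hat h_j:=h_j/(h_j|_Y)$ does not typecheck on $V_j$ (the function $h_j|_Y$ lives only on $W_j$), and if you repair it by extending $h_j|_Y$ to $V_j$ (say constantly in $\theta_j$, with extension $H_j$), the claimed identity $\tilde\phi_{jk}=(\hat h_k/\hat h_j)\,\phi_{jk}$ is off by the factor $c_{jk}H_k/H_j$, where $c_{jk}=h_j|_Y/h_k|_Y=t_{jk}\tilde t_{jk}^{-1}$ is constant on $W_{jk}$ but $H_k/H_j$ need not be. Its $d\log$ restricted to $Y$ is a conormal-valued $1$-cocycle that is not visibly a coboundary: it represents the image of the global holomorphic $1$-form glued from the $d_Y\log(h_j|_Y)$ under the connecting map $H^0(Y,\Omega^1_Y)\to H^1(Y,\mathcal{O}_Y(N^{-1}_{Y/X}))$ of the conormal sequence, so your conclusion $\tilde\sigma_{jk}-\sigma_{jk}=\tau_k-\tau_j$ does not follow as stated. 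The missing observation that closes the gap: since both frame systems are admissible, $|h_k|_Y/h_j|_Y|=|\tilde t_{jk}t_{jk}^{-1}|=1$, so the moduli $|h_j|_Y|$ glue to a global function on the compact manifold $Y$ whose logarithm is pluriharmonic, hence locally constant by the maximum principle; a nonvanishing holomorphic function of constant modulus is locally constant, so each $h_j|_Y$ is locally constant. With that, $\hat h_j=h_j/(h_j|_Y)$ is honestly defined, your cancellation is exact, and the coboundary identity holds. Note this also means compactness of $Y$ enters not only through Theorem~2.5 (existence of the flat structure) but also at this point, contrary to your closing remark; with this one addition your proof is complete and is strictly more than the paper itself records.
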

	\begin{proof}
		This have already checked by Koike in \cite{Ko}. 
		For the specific example treated in this paper, the fact that 
		\(\{ (W_{jk}, f_{kj,1}(x_j)\cdot d\theta_j) \}\) 
		satisfies the $1$-cocycle condition is also discussed in Claim~3.3 of the next section.
	\end{proof}
	Next, we want to introduce the important theorem in \cite{Ko} which will be used in our main theorem.     
	\begin{theorem}[{\cite[Theorem 1.4]{Ko}}]\label{Theorem 2.12}
		Let $X$ be a complex manifold, and let $Y \subset X$ be a compact Kähler submanifold. Let $L$ be a line bundle on $X$ such that the restriction $L|_Y$ is topologically trivial. Suppose that $u_1(Y, X, L) \neq 0$. Then $L$ is not semipositive.
	\end{theorem}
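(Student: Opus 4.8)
The plan is to prove the contrapositive: assuming that $L$ admits a smooth Hermitian metric $h$ with $\Theta_{L,h}\ge 0$ everywhere, I will deduce that $u_1(Y,X,L)=0$. I work with the covering $\{V_j\}$ of a neighbourhood of $Y$ in $X$ and the holomorphic frames $\{e_j\}$ used to define $u_1$, so that $t_{jk}^{-1}e_k/e_j = 1 + f_{kj,1}(x_j)\theta_j + O(|\theta_j|^2)$ with $t_{jk}\in U(1)$ on $W_{jk}$, and I write $h(e_j,e_j)=e^{-\varphi_j}$, so that each $\varphi_j$ is a real plurisubharmonic function on $V_j$ (since $\Theta_{L,h}=i\ddbar\varphi_j\ge 0$). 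For notational simplicity I take the codimension $r=1$, exactly as in the definition of $u_1$ recalled above; the general case follows by running the argument in each normal coordinate $\theta_j^{\lambda}$ separately, and one may assume $Y$ connected. The first step is to see that $\Theta_{L,h}|_Y=0$: on $W_{jk}$ one has $\varphi_j-\varphi_k=\log|1+f_{kj,1}\theta_j+O(|\theta_j|^2)|^2$, which vanishes on $Y=\{\theta_j=0\}$, so the $\varphi_j|_Y$ glue to a global smooth function $\psi$ on $Y$; since $i\ddbar\psi=\Theta_{L,h}|_Y\ge 0$ is the restriction of a semipositive form, $\psi$ is plurisubharmonic on the compact connected manifold $Y$, hence constant, and therefore $\Theta_{L,h}|_Y=i\ddbar\psi=0$. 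After rescaling $h$ I may assume $\psi\equiv 0$.

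The heart of the argument is a local computation turning semipositivity into holomorphicity of the normal derivatives of the $\varphi_j$. Fix $j$ and use coordinates $(x_j^1,\dots,x_j^d,\theta_j)$ on $V_j$ adapted to $Y=\{\theta_j=0\}$. Because $\Theta_{L,h}|_Y=0$, the tangential block of the complex Hessian of $\varphi_j$ vanishes along $Y$, i.e.\ $\del^2\varphi_j/\del x_j^{\mu}\del\bar x_j^{\nu}=0$ at $\theta_j=0$ for all $\mu,\nu$. Since the full complex Hessian of $\varphi_j$ is positive semidefinite, the $2\times 2$ Hermitian submatrix formed by the indices $(x_j^{\mu},\theta_j)$ is positive semidefinite with vanishing $(1,1)$-entry at $\theta_j=0$, so its determinant $-|\del^2\varphi_j/\del x_j^{\mu}\del\bar\theta_j|^2\ge 0$ forces $\del^2\varphi_j/\del x_j^{\mu}\del\bar\theta_j=0$ at $\theta_j=0$. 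Hence the smooth function $\alpha_j(x_j):=\del\varphi_j/\del\theta_j\,(x_j,0)$ satisfies $\del\alpha_j/\del\bar x_j^{\mu}=\overline{\del^2\varphi_j/\del x_j^{\mu}\del\bar\theta_j}\,(x_j,0)=0$, so $\alpha_j$ is holomorphic on $W_j$, and $\sigma_j:=\alpha_j(x_j)\,d\theta_j$ is a holomorphic section of $N^{-1}_{Y/X}$ over $W_j$.

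Finally I would identify $\{\sigma_j\}$ as a primitive of the cocycle defining $u_1$. Expanding the identity $\varphi_j=\varphi_k+\log|1+f_{kj,1}\theta_j+\cdots|^2$ to first order in $\theta_j$ on $W_{jk}$, writing $\varphi_j=2\,\mathrm{Re}(\alpha_j\theta_j)+O(|\theta_j|^2)$ and $\varphi_k=2\,\mathrm{Re}(\alpha_k\theta_k)+O(|\theta_k|^2)$, and using $\theta_k=r_{kj}(x_j)\theta_j+O(\theta_j^2)$, where $r_{kj}$ is the transition function of $N^{-1}_{Y/X}$ relating the frames $d\theta_j$ (so that $d\theta_k=r_{kj}\,d\theta_j$ on $Y$), a comparison of the $\mathrm{Re}(\,\cdot\,\theta_j)$-parts gives $\alpha_j-\alpha_k r_{kj}=f_{kj,1}$ on $W_{jk}$, that is, $\sigma_j-\sigma_k=f_{kj,1}(x_j)\,d\theta_j$. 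Thus the $1$-cocycle $\{(W_{jk},f_{kj,1}(x_j)\,d\theta_j)\}$ is the \v{C}ech coboundary of the holomorphic cochain $\{\sigma_j\}$, so $u_1(Y,X,L)=0$, contradicting the hypothesis; hence $L$ is not semipositive. The step I expect to be the main obstacle is precisely the passage from $\Theta_{L,h}|_Y=0$ to the holomorphicity of $\alpha_j$: this is where genuine semipositivity of $\Theta_{L,h}$ in a whole neighbourhood of $Y$ is used — the mere vanishing of $c_1(L|_Y)$ would not suffice — and one must arrange frames and coordinates so that the first-order Taylor data of the weights $\varphi_j$ match the \v{C}ech data $f_{kj,1}$ on the nose.
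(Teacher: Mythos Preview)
The paper does not give its own proof of this theorem; it simply quotes \cite[Theorem~1.4]{Ko} and uses it as a black box in Section~3. So there is no in-paper argument to compare against. That said, your proof of the contrapositive is correct and is essentially the standard argument (and, as far as one can tell, the one Koike uses): the two substantive steps --- (i) using compactness of $Y$ to force $\Theta_{L,h}|_Y=0$ via a global plurisubharmonic potential, and (ii) exploiting positive semidefiniteness of the complex Hessian with vanishing tangential block to kill the mixed entries $\partial^2\varphi_j/\partial x_j^{\mu}\partial\bar\theta_j$ along $Y$, hence making $\alpha_j=\partial\varphi_j/\partial\theta_j|_Y$ holomorphic --- are exactly right, and the coboundary identification $\sigma_j-\sigma_k=f_{kj,1}\,d\theta_j$ follows cleanly from the first-order Taylor matching you wrote down.

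Two small remarks. First, when you substitute $\theta_k=r_{kj}\theta_j+O(\theta_j^2)$ into $\varphi_k$, you implicitly use that $\partial\varphi_k/\partial x_k^{\nu}|_Y=0$ (so that the possible dependence of $x_k$ on $\theta_j$ does not contribute at first order); this holds because $\varphi_k|_Y\equiv 0$, but it is worth saying. Second, for $r>1$ the same Hessian argument kills every mixed entry $\partial^2\varphi_j/\partial x_j^{\mu}\partial\bar\theta_j^{\lambda}$ simultaneously (a positive semidefinite Hermitian matrix with a zero diagonal entry has the whole corresponding row and column zero), so ``running the argument in each $\theta_j^{\lambda}$ separately'' is indeed enough; your $\sigma_j=\sum_\lambda \alpha_j^{\lambda}\,d\theta_j^{\lambda}$ then trivializes the class in $\check H^1(\{W_j\},\mathcal{O}_Y(N_{Y/X}^{*}))$.
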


	\section{Proof of main theorem}
	In this section, we first recall that well-known fact that the line bundle $L$ is nef and big. We then turn to the main result of this paper, which states that $L$ is not semipositive. This will be shown by establishing that the first obstruction class $u_{1}(Y,X,L)$ is nonzero, and by applying Theorem 2.13. All relevant notions used here are as defined in Section 2.1 and Section 2.2.
	\begin{claim}
		The line bundle $L$ is nef and big.
	\end{claim}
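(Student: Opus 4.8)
The plan is to prove the two assertions—nefness and bigness—separately, exploiting the structure of $X$ as a $\mathbb{P}^1$-bundle compactification of the affine bundle $A \to R$, with $L = p^*F \otimes [Y]$. The key geometric input is Claim~2.9, namely $N_{Y/X} \cong (p|_Y)^*F^{-1}$, equivalently $[Y]|_Y \cong (p|_Y)^*F^{-1}$, together with $\deg F = 1$.

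First I would establish nefness by checking $(L.C) \ge 0$ for every irreducible curve $C \subset X$. There are three cases. If $C = Y$, then $(L.Y) = (p^*F.Y) + (Y.Y) = \deg\big((p^*F)|_Y\big) + \deg N_{Y/X} = \deg F + \deg F^{-1} = 1 - 1 = 0$. If $C$ is a fiber of $p$ (a copy of $\mathbb{P}^1$), then $(p^*F.C) = 0$ since $p^*F$ is trivial on fibers, and $(Y.C) = 1$ because $Y$ meets each fiber transversally in one point; hence $(L.C) = 1 > 0$. Finally, if $C$ is a horizontal curve (not contained in $Y$, not a fiber), then $p|_C : C \to R$ is a finite surjective map of some degree $e \ge 1$, so $(p^*F.C) = e\deg F = e$, while $(Y.C) = C \cap Y \ge 0$ since $C \neq Y$; thus $(L.C) \ge e > 0$. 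This disposes of nefness.

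For bigness, since $\dim X = 2$ and $L$ is nef, it suffices by the standard criterion (a nef line bundle on a projective surface is big if and only if $(L^2) > 0$) to compute the self-intersection $(L^2)$. Expanding, $(L^2) = (p^*F)^2 + 2(p^*F.Y) + (Y^2)$. Now $(p^*F)^2 = 0$ because $p^*F$ is pulled back from the curve $R$; $(p^*F.Y) = \deg F = 1$ as above; and $(Y^2) = \deg N_{Y/X} = -1$. Hence $(L^2) = 0 + 2 - 1 = 1 > 0$, so $L$ is big. (Alternatively, one can note $L$ is nef and big iff it lies in the big cone, and $(L^2)>0$ with $L$ nef places it there; or exhibit sections directly, but the intersection computation is cleanest.)

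I expect the main subtlety—though it is minor—to be the bookkeeping in the horizontal-curve case of the nefness argument: one must be careful that $X$ is genuinely projective so that Definition~2.2 applies (this follows since $X$ is a ruled surface over $R$ in the birational sense, cf.\ Remark~2.11, and $Y$ is a section so $X$ is in fact the projectivization of a rank-two bundle or an elementary modification thereof), and that an irreducible horizontal curve cannot coincide with $Y$ while still being counted, so the intersection number $(Y.C)$ is a genuine count of points and hence nonnegative. Once projectivity and the three-case analysis are in place, both statements reduce to the single numerical fact $\deg F = 1$ combined with Claim~2.9.
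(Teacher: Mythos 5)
Your proof is correct, and its overall strategy coincides with the paper's: nefness is checked curve by curve, and bigness follows from nefness together with $(L^2)=1$ and the standard criterion for nef line bundles on projective surfaces. The difference is in how the curve-by-curve check is organized and justified. The paper splits curves into $\Gamma=Y$, curves meeting $Y$, and curves disjoint from $Y$, and gets $(p^*F.\Gamma)\ge 0$ by observing that $\deg F=1$ makes $F$ positive on $R$, so $p^*F$ is semipositive and hence nef; you instead split curves by their behavior under $p$ (the section $Y$, fibers, horizontal curves) and compute $(p^*F.C)$ directly via the projection formula, $(p^*F.C)=\deg(p|_C)\cdot\deg F$ for horizontal $C$ and $0$ for fibers. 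Your route is slightly more elementary in that it avoids invoking positivity of $F$ and the pullback-of-nef step, at the cost of having to note that $X$ is projective and that every irreducible curve other than $Y$ is either a fiber or dominates $R$; the paper's route avoids that fibration bookkeeping but leans on the cited positivity results. Your bigness computation $(L^2)=(p^*F)^2+2(p^*F.Y)+(Y^2)=0+2-1=1$ is in fact written out more carefully than the paper's terse expansion, and your parenthetical concern about projectivity is not a genuine issue: $X$ is a geometrically ruled surface over $R$ (it is the contraction $Z$, not $X$, that fails to be projective in Grauert's example), so the intersection-theoretic definitions apply.
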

	\begin{proof}
		Any effective curve $\Gamma$ in $X$ falls into one of the following two cases:
		
		\textbf{Case 1($\Gamma = Y$):}
		$(L .\Gamma) = (L .Y) = (p^*F \otimes [Y].Y) 
		= {\rm deg}\,p^*F|_Y + {\rm deg}\,[Y]|_Y
		= {\rm deg}\,p^*F|_Y + {\rm deg}\,N_{Y/X} 
		= {\rm deg}\,F - {\rm deg}\,F = 0$ by Claim 2.9.
		
		\textbf{Case 2($\Gamma \ne Y$ and $\Gamma \cap Y \ne \emptyset$):}
		We have $(L .\Gamma) = (p^*F \otimes [Y].\Gamma) = (p^*F .\Gamma) + ([Y] .\Gamma) .$ If $\Gamma\cap Y\ne 0$, then ${\rm deg}\,F=1$ and {\cite[Proposition, p. 148]{GH}} imply that $F$ is positive. By the Figure 1 in Section 2.1, $p^*F$ is semipositive and hence nef, so $(p^*F .\Gamma) \ge 0$. Moreover, since $([Y].\Gamma)>0$, we have $(L.\Gamma)>0$. If $\Gamma\cap Y=0$, then $([Y].\Gamma)=0$ and $p^*F$ being nef gives $(p^*F.\Gamma)\ge 0$, hence $(L.\Gamma) = (p^*F \otimes [Y].\Gamma) = (p^*F.\Gamma) + ([Y].\Gamma) \ge 0.$ 
		
		Combining the results of the two cases above, we conclude that $L$ is nef. To check that $L$ is big, it suffices to show that the self-intersection number of $L$ is positive by {\cite[Corollary~8.4]{D1}}. 
		We compute
		\[
		(L^2) = (L \cdot L) = (p^*F \cdot p^*F) + \deg F = 0 + 1 = 1.
		\]
		This proves that $L$ is big.
	\end{proof}
	
	\begin{theorem}\label{Theorem 3.2}
		The line bundle $L$ is not semipositive.
	\end{theorem}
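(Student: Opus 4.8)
The plan is to compute the first obstruction class $u_1(Y,X,L)$ explicitly for the line bundle $L = p^*F \otimes [Y]$ on Grauert's example, show it is nonzero, and then invoke Theorem~\ref{Theorem 2.12} to conclude that $L$ is not semipositive. The first preliminary point is that $L|_Y$ is topologically trivial: since $L|_Y = (p^*F)|_Y \otimes [Y]|_Y = (p|_Y)^*F \otimes N_{Y/X}$ and Claim~2.9 gives $N_{Y/X} \cong (p|_Y)^*F^{-1}$, we get $L|_Y \cong \mathcal{O}_Y$, so in particular it is topologically trivial and the obstruction class is defined.

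Next I would set up the local frames. On $V_j = U_j \times \mathbb{C}$ with coordinate $\theta_j = 1/\eta_j$ near $Y$, the line bundle $[Y]$ has a canonical local frame (a local defining section of $Y$ divided appropriately), and $p^*F$ has the frame $p^*m_j$ with transition $a_{jk}^{-1}$. Combining these gives local frames $e_j$ of $L = p^*F \otimes [Y]$, and the key computation is the transition function $e_k/e_j$ on $V_{jk}$, expanded to first order in $\theta_j$. Using the gluing relation $\eta_j = a_{jk}(x_j)\eta_k + \xi_{jk}(x_j)$, hence $\theta_j = \theta_k/(a_{jk}(x_j) + \xi_{jk}(x_j)\theta_k)$, one computes the transition function for $[Y]$ (which involves $\partial\theta_j/\partial\theta_k$ as in Claim~2.8) and multiplies by the transition $a_{jk}$ coming from $p^*F$. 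After choosing the unitary-flat frame structure on $L|_Y$ (which exists by Theorem~2.5 since $L|_Y$ is trivial, so one can even take $t_{jk}=1$ in suitable frames), the expansion
\[
\frac{e_k}{e_j} = 1 + f_{kj,1}(x_j)\cdot\theta_j + O(|\theta_j|^2)
\]
should yield $f_{kj,1}(x_j)$ expressed directly in terms of $\xi_{jk}$ and $a_{jk}$. I expect $f_{kj,1}\, d\theta_j$ to equal, up to the identification $N_{Y/X}^{-1} \cong (p|_Y)^*F$ from Claim~2.9, the cocycle $\{\xi_{jk}m_j\}$ representing $[\xi] \in \check{H}^1(\mathcal{U}, \mathcal{O}_R(F))$ pulled back to $Y$.

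The final step is to deduce $u_1(Y,X,L) \neq 0$. Under the isomorphism $\check{H}^1(\{W_j\}, \mathcal{O}_Y(N_{Y/X}^{-1})) \cong \check{H}^1(\{U_j\}, \mathcal{O}_R(F))$ induced by $p|_Y$ being a biholomorphism $Y \xrightarrow{\sim} R$ together with $N_{Y/X}^{-1} \cong (p|_Y)^*F$, the class $u_1(Y,X,L)$ corresponds to $[\xi]$, which was chosen to be nonzero. (I should also verify the open covering is fine enough that Čech cohomology on $\mathcal{U}$ computes the sheaf cohomology $H^1(R, \mathcal{O}_R(F))$, e.g. by taking a Stein refinement, so that nonvanishing of the Čech class follows from $H^1(R,\mathcal{O}_R(F)) \neq 0$ via Lemma~2.6.) Then Theorem~\ref{Theorem 2.12} immediately gives that $L$ is not semipositive.

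The main obstacle will be the bookkeeping in the transition-function expansion: correctly identifying the local frame of $[Y]$ near the infinity section, tracking how the affine (rather than linear) gluing $\eta_j = a_{jk}\eta_k + \xi_{jk}$ enters the first-order term in $\theta_j$, and confirming that the resulting first-order coefficient is exactly $\xi_{jk}$ (and not, say, $\xi_{jk}$ times an extra factor of $a_{jk}$ or a derivative term) after passing to unitary frames on $L|_Y$. Care is also needed to check the cocycle lands in the correct sheaf $\mathcal{O}_Y(N_{Y/X}^{-1})$ with the $d\theta_j$ twisting, matching the convention of Theorem~2.11; the consistency of this is the content of Claim~3.3 referenced in the excerpt, which I would state and prove alongside the computation.
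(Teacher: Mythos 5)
Your proposal follows essentially the same route as the paper: expand the transition function $e_k/e_j$ of $L=p^*F\otimes[Y]$ in $\theta_j$ using the affine gluing $\theta_j=\theta_k/(a_{jk}+\xi_{jk}\theta_k)$, identify the resulting cocycle (the paper gets $\{-\xi_{jk}\,d\theta_j\}$, so your predicted coefficient is correct up to an immaterial sign) with $\xi$ under $N_{Y/X}^{-1}\cong(p|_Y)^*F$ and $p|_Y:Y\xrightarrow{\sim}R$, conclude $u_1(Y,X,L)\neq 0$ from $[\xi]\neq 0$, and apply Koike's Theorem~\ref{Theorem 2.12}. The points you flag as needing care (frame bookkeeping, the $d\theta_j$ twisting, the cocycle condition) are exactly what the paper handles in its Claims 3.3 and 3.4, so the plan is sound and matches the paper's argument.
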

	
	\begin{proof} 
		Using the transition relation between \( \eta_j \) and \( \eta_k \), we have
		\[
			\theta_j = \frac{1}{\eta_j} = \frac{1}{a_{jk} \eta_k + \xi_{jk}} = \frac{\theta_k}{a_{jk} + \xi_{jk} \theta_k}
		\]
		on \( W_j \cap W_k \). Applying the Taylor expansion of \( \frac{1}{1 + x} \), we obtain
		\[
			\frac{\theta_k}{a_{jk} + \xi_{jk} \theta_k} = \frac{\theta_k}{a_{jk}} \left( 1 - \frac{\xi_{jk}}{a_{jk}} \theta_k + o(\theta_k) \right) = \frac{\theta_k}{a_{jk}} + o(\theta_k).
		\]
		Let $ \{ v_k \} $ be a system of local frames for the line bundle $[Y]$ such that \( v_j \theta_j = v_k \theta_k \) on \( W_j \cap W_k \), which patch to define the canonical section of \([Y] \).  
		It follows that \( m_j = a_{jk}^{-1} m_k \), and
		\[
			v_k = \frac{1}{a_{jk} + \xi_{jk} \theta_k} \cdot v_j.
		\]
		Now, letting $ e_k := p^*m_k \otimes v_k $, we use $\{e_k\}$ as a local frame of \( L \) on \( V_k \) in what follows. Then we compute the transition function
		\[
			\frac{e_k}{e_j} = \frac{p^* m_k \otimes v_k}{p^* m_j \otimes v_j}
			= \frac{a_{jk} \cdot v_k}{v_j}
			= a_{jk} \cdot \frac{1}{a_{jk} + \xi_{jk} \theta_k}.
		\]
		By Taylor expansion, this becomes
		\begin{equation}
		\frac{e_k}{e_j} 
		= 1 - \frac{\xi_{jk}}{a_{jk}}\cdot\theta_k + o(\theta_k).
		\end{equation}
	According to the definition of the first obstruction class in \cite{Ko}, we focus on the coefficient of $\theta_j$. To relate \( \theta_j \) and \( \theta_k \), recall again:
		\begin{equation}
			\theta_j = \frac{\theta_k}{a_{jk} + \xi_{jk} \theta_k}.
		\end{equation}
	By computation, we have
		\[
			\theta_{k}=a_{jk}\cdot\frac{\theta_j}{1-\xi_{jk}\theta_{j}} 
		\]
		By Talor expansion which implies
		\[
			\theta_k = a_{jk} \theta_j \left( 1 - \xi_{jk} \theta_j + o(\theta_j) \right) = a_{jk} \theta_j + o(\theta_j).
		\]
		Substituting this expansion into the transition function computed earlier in Equation (1), we obtain	\[
			\frac{e_k}{e_j} 
			= 1 - \frac{\xi_{jk}}{a_{jk}} \theta_k + o(\theta_k) 
			= 1 - \frac{\xi_{jk}}{a_{jk}} \cdot a_{jk} \theta_j + o(\theta_j) 
			= 1 - \xi_{jk} \theta_j + o(\theta_j).
		\]
		According to our strategy, we aim to apply Theorem 2.13, which was explicitly introduced in Section 2.3.
		In order to prove that $L$ is not semipositive, it suffices to show that the first obstruction class is nontrivial.
		Since the following two claims establish this fact, the proof is complete.
		\end{proof}

	\begin{claim}[{\cite[Lemma 3.1]{Ko}}]
		$\left[ \{\left(W_{jk}, -\xi_{jk}\, d\theta_j \right)\} \right] \in \check{H}^1\left( \{ W_{jk} \}, \mathcal{O}_Y(N_{Y/X}^{-1}) \right).$
	\end{claim}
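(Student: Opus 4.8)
The plan is to verify directly that the collection $\{(W_{jk},-\xi_{jk}\,d\theta_j)\}$ is an additive \v{C}ech $1$-cocycle with coefficients in $\mathcal{O}_Y(N_{Y/X}^{-1})$; granting the isomorphism $N_{Y/X}^{-1}\cong(p|_Y)^{*}F$ from Claim~2.9, the cocycle identity will reduce to the one already satisfied by the original class $[\xi]\in\check{H}^{1}(\mathcal{U},\mathcal{O}_R(F))$.

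First I would pin down the transition law for the local frames involved. From the computation in the proof of Claim~2.8 one has $\frac{\partial}{\partial\theta_j}=a_{jk}(x_j)\cdot\frac{\partial}{\partial\theta_k}$ on $W_j\cap W_k$, so the dual frames $d\theta_j$ of $N_{Y/X}^{-1}$ satisfy $d\theta_k=a_{jk}\,d\theta_j$, equivalently $d\theta_j=a_{jk}^{-1}\,d\theta_k$. This is precisely the transition law of the frames $m_j$ of $(p|_Y)^{*}F$ (recall $m_k=a_{jk}m_j$), as it must be in view of Claim~2.9. Since each $\xi_{jk}$ is holomorphic on $U_{jk}\cong W_{jk}$, the expression $-\xi_{jk}\,d\theta_j$ is a genuine holomorphic section of $N_{Y/X}^{-1}$ over $W_{jk}$, so $\{(W_{jk},-\xi_{jk}\,d\theta_j)\}$ is a well-defined \v{C}ech $1$-cochain. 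I would also record here that the coefficient extracted in the proof of Theorem~3.2, namely $f_{kj,1}(x_j)=-\xi_{jk}(x_j)$ with $t_{jk}=1$ (the latter because $e_k/e_j\to 1$ as $\theta_j\to 0$), is exactly the datum appearing in the definition of $u_1(Y,X,L)$ recalled in Section~2.3.

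Next I would check the $1$-cocycle condition on a triple overlap $W_{jkl}=W_j\cap W_k\cap W_l$. Writing every term over the frame $d\theta_j$ by means of $d\theta_k=a_{jk}\,d\theta_j$, the \v{C}ech coboundary becomes
\[
(-\xi_{jk}\,d\theta_j)-(-\xi_{jl}\,d\theta_j)+(-\xi_{kl}\,d\theta_k)= -\bigl(\xi_{jk}-\xi_{jl}+a_{jk}\xi_{kl}\bigr)\,d\theta_j .
\]
On the other hand, the $1$-cocycle condition for $[\xi]=[\{(U_{jk},\xi_{jk}m_j)\}]$ in $\check{H}^{1}(\mathcal{U},\mathcal{O}_R(F))$ reads $\xi_{jk}m_j-\xi_{jl}m_j+\xi_{kl}m_k=0$ on $U_{jkl}$, which upon substituting $m_k=a_{jk}m_j$ is exactly $\xi_{jk}-\xi_{jl}+a_{jk}\xi_{kl}=0$. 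Hence the coboundary above vanishes, $\{(W_{jk},-\xi_{jk}\,d\theta_j)\}$ is a $1$-cocycle, and it therefore determines a class in $\check{H}^{1}(\{W_j\},\mathcal{O}_Y(N_{Y/X}^{-1}))$, as asserted.

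The only genuine obstacle is the bookkeeping of frames: one must read each local section in the frame indicated by its first index and convert consistently on overlaps via $d\theta_k=a_{jk}\,d\theta_j$, noting that this transition matches that of $(p|_Y)^{*}F$ so that the cocycle identity for the $\xi_{jk}$ transfers verbatim. Once this dictionary is fixed, the proof is the one-line computation above. I would also point out that this argument makes transparent the statement anticipated in Remark~2.11: under $N_{Y/X}^{-1}\cong(p|_Y)^{*}F$ the cocycle $\{(W_{jk},-\xi_{jk}\,d\theta_j)\}$ corresponds to $-(p|_Y)^{*}\{(U_{jk},\xi_{jk}m_j)\}$, so that $u_1(Y,X,L)$ is, up to pullback and sign, the class $[\xi]$ itself — which is precisely what the subsequent claim will exploit to conclude $u_1(Y,X,L)\neq 0$.
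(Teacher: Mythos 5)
Your proof is correct, and it takes a genuinely different (and for this example cleaner) route than the paper. The paper, following Koike's Lemma~3.1, derives the additive cocycle identity from the multiplicative relation $\tfrac{e_k}{e_j}\cdot\tfrac{e_j}{e_l}\cdot\tfrac{e_l}{e_k}=1$ satisfied by the transition functions of $L$, extracting the first-order coefficient in $\theta$; that is the argument that works for an arbitrary line bundle $L$ with $L|_Y$ unitary flat, regardless of where the coefficients $f_{kj,1}$ come from (and, as written, the paper's intermediate displayed identities contain typos, e.g.\ the stray $1/\theta_k$). You instead exploit the specific construction: the coefficients are literally the $\xi_{jk}$, which already satisfy the \v{C}ech cocycle identity $\xi_{jk}-\xi_{jl}+a_{jk}\xi_{kl}=0$ by virtue of $\{(U_{jk},\xi_{jk}m_j)\}$ representing a class in $\check{H}^1(\mathcal{U},\mathcal{O}_R(F))$, and the frame transition $d\theta_k=a_{jk}\,d\theta_j$ of $N_{Y/X}^{-1}$ matches that of the $m_j$ under the isomorphism $N_{Y/X}^{-1}\cong(p|_Y)^{*}F$ of Claim~2.9, so the identity transfers verbatim. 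Your bookkeeping of frames and signs is consistent, and your closing observation that $u_1(Y,X,L)$ is, up to sign, the pullback of $[\xi]$ is exactly the mechanism that Claim~3.4 then uses. What you give up relative to Koike's argument is generality: your verification is tied to Grauert's construction, whereas the paper's approach establishes the cocycle property of the first obstruction class for any triple $(Y,X,L)$.
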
	
	
	\begin{proof}
		We refer the reader to \cite{Ko} for the full details of the proof. However, to make the paper self-contained, we briefly outline the key idea below. By examining Equation (1), we observe that $\frac{e_k}{e_j} \cdot \frac{e_j}{e_l} \cdot \frac{e_l}{e_k} = 1$, then we obtain the identity
		\begin{equation}
			1 - \xi_{jk} \theta_j - \xi_{jl} \theta_l - \xi_{lk} \theta_k = 1.
		\end{equation}
		From Equation (1), we know that
		\[
			\theta_j = \frac{1}{a_{jk}} \cdot \frac{\theta_k}{1 + \frac{\xi_{jk}}{a_{jk}} \theta_k}
			= \frac{\theta_k}{a_{jk}} \cdot \left( 1 - \frac{\xi_{jk}}{a_{jk}} \theta_k + o(\theta_k) \right)
			= \frac{\theta_k}{a_{jk}} + o(\theta_k).
		\]
		Similarly, using the transition functions, we obtain $\theta_l = \frac{\theta_k}{a_{lk}} + o(\theta_k).$ Substituting these into Equation (3), we obtain
		\[
			-\frac{\xi_{jk}}{a_{jk}} - \frac{\xi_{kl}}{a_{kl}} \cdot \frac{1}{a_{lk}} - \frac{\xi_{lj}}{a_{lj}} \cdot \frac{1}{\theta_k} = 0.
		\]
		Our goal is to show that the 1-cochain $-\frac{\xi_{jk}}{a_{jk}} \cdot d\theta_k$ satisfies the Čech 1-cocycle condition, namely
		\[
			-\frac{\xi_{jk}}{a_{jk}} \cdot d\theta_k
			- \frac{\xi_{kl}}{a_{kl}} \cdot d\theta_l
			- \frac{\xi_{lj}}{a_{lj}} \cdot d\theta_j = 0.
		\]
		Considering the local frame of \( N_{Y/X}^{-1} \), we conclude that the cocycle condition is satisfied, as desired.
	\end{proof}

	\begin{claim}\label{Claim 3.4}
		$u_{1}(Y,X,L)\ne 0$.
	\end{claim}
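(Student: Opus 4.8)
The plan is to translate the non-vanishing of $u_1(Y,X,L)$ into the non-vanishing of the original Čech class $[\xi] \in \check{H}^1(\mathcal{U}, \mathcal{O}_R(F))$, which was chosen to be nonzero at the outset. From the computation in the proof of Theorem~\ref{Theorem 3.2}, we have the representative
\[
u_1(Y,X,L) = \bigl[\,\{(W_{jk}, -\xi_{jk}\, d\theta_j)\}\,\bigr] \in \check{H}^1\bigl(\{W_j\}, \mathcal{O}_Y(N_{Y/X}^{-1})\bigr),
\]
and by Claim~2.9 we have the isomorphism $N_{Y/X}^{-1} \cong (p|_Y)^*F$, induced by the morphism $\Phi$ of Definition~2.7. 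So the first step is to carry the cocycle $\{-\xi_{jk}\, d\theta_j\}$ through the isomorphism $\Phi^{-1}$ (equivalently, identify $d\theta_j$ with a local frame and track how the transition functions match up). Under $\Phi$, the frame $\partial/\partial\theta_j$ of $N_{Y/X}$ corresponds to $(p|_Y)^*n_j$ of $(p|_Y)^*F^{-1}$, so dually $d\theta_j$ corresponds to $(p|_Y)^*m_j$, the pulled-back local frame of $F$. Hence the image of $u_1(Y,X,L)$ in $\check{H}^1(\{W_j\}, \mathcal{O}_Y((p|_Y)^*F))$ is represented by $\{-\xi_{jk}\cdot (p|_Y)^*m_j\}$.

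The second step is to recognize that $\{W_j = p^{-1}(U_j)\cap Y\}$ is, via the isomorphism $p|_Y : Y \xrightarrow{\sim} R$, the same covering as $\mathcal{U} = \{U_j\}$, and that $(p|_Y)^*$ induces an isomorphism $\check{H}^1(\{W_j\}, \mathcal{O}_Y((p|_Y)^*F)) \cong \check{H}^1(\mathcal{U}, \mathcal{O}_R(F))$. Under this identification, the class $\{-\xi_{jk}\cdot(p|_Y)^*m_j\}$ corresponds to $\{-\xi_{jk}\, m_j\} = -[\xi]$ in $\check{H}^1(\mathcal{U}, \mathcal{O}_R(F))$. Since $[\xi] \neq 0$ by construction, we get $-[\xi] \neq 0$, and therefore $u_1(Y,X,L) \neq 0$. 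One should also note that the covering $\mathcal{U}$ was taken sufficiently fine, so the Čech cohomology on this covering computes the sheaf cohomology $H^1(R, \mathcal{O}_R(F))$ (via Leray, $U_j$ being Stein), which ensures the comparison is faithful and that passing to a finer covering does not kill the class; by Theorem~2.11 the definition of $u_1$ is covering-independent anyway.

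I expect the main obstacle to be bookkeeping: making precise that the morphism $\Phi$ of Definition~2.7 sends the cotangent frame $d\theta_j$ to $(p|_Y)^*m_j$ with no stray transition factor, so that the cocycle $\{-\xi_{jk}\, d\theta_j\}$ really does map to $\{-\xi_{jk}\, m_j\}$ and not to something twisted by powers of $a_{jk}$. Concretely, from Claim~2.8 we have $\partial/\partial\theta_j = a_{jk}\,\partial/\partial\theta_k$ on $W_{jk}$, hence $d\theta_j = a_{jk}^{-1}\,d\theta_k$, which matches the transition law $m_j = a_{jk}^{-1}m_k$ of the frames of $F$; this is exactly the compatibility that makes $d\theta_j \mapsto (p|_Y)^*m_j$ a well-defined bundle isomorphism $N_{Y/X}^{-1} \cong (p|_Y)^*F$, and it is (the dual of) the isomorphism $\Phi$ already established. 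Once this identification is pinned down, the rest is the formal statement that an isomorphism of sheaves induces an isomorphism on $\check{H}^1$ carrying the explicit representative of $u_1$ to a nonzero multiple of $[\xi]$, completing the proof. Combining Claim~\ref{Claim 3.4} with Theorem~\ref{Theorem 2.12} then yields that $L$ is not semipositive, and together with Claim~3.1 this establishes Theorem~1.2.
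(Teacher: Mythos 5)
Your proposal is correct and follows essentially the same route as the paper: both reduce the non-vanishing of $u_1(Y,X,L)$ to the non-vanishing of $[\xi]$ by transporting the representative $\{-\xi_{jk}\,d\theta_j\}$ through the identification $N_{Y/X}^{-1}\cong (p|_Y)^*F$ (with $d\theta_j \leftrightarrow (p|_Y)^*m_j$, the transition factors matching exactly as you checked) and the identification of $\{W_j\}$ with $\{U_j\}$ via $p|_Y$. The only difference is presentational: the paper argues by contradiction, transporting hypothetical splitting functions $\varphi_j$ to functions $g_j$ on $R$ that would trivialize $\{\xi_{jk}m_j\}$, whereas you invoke directly that the sheaf isomorphism induces an isomorphism on Čech $H^1$ sending $u_1$ to $-[\xi]\neq 0$.
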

	
	\begin{proof}
		Assume, for contradiction, that $\bigl[ \{(W_{jk}, -\xi_{jk}\, d\theta_j)\} \bigr] = 0\in \check{H}^1\left( \{ W_{jk} \}, \mathcal{O}_Y(N_{Y/X}^{-1}) \right).$ Then for all $j$, there exist holomorphic functions $\varphi_j : W_j \to \mathbb{C}$ such that for all $j,k$,
		\[
		\varphi_k\, d\theta_k - \varphi_j\, d\theta_j = -\xi_{jk}\, d\theta_j 
		\quad \text{on } W_j \cap W_k.
		\]
		Equivalently, we can write
	$\varphi_k \cdot \frac{d\theta_k}{d\theta_j} - \varphi_j = -\xi_{jk},$
	and by local frame of conormal bundle $N_{Y/X}^{-1}$, we get $\varphi_k \cdot a_{jk} - \varphi_j = -\xi_{jk},$ and using the local frame $m_j$ of $F$, we have
		\[
		\varphi_k m_k - \varphi_j m_j = -\xi_{jk} m_j
		\quad \text{on } W_j \cap W_k.
		\]
		Setting $g_j := \varphi_j \circ p^{-1}|_{W_j}$, we obtain
		\[
		\delta\{ (U_j, g_j m_j) \} = \{ (U_{jk}, -\xi_{jk} m_j) \} \text{ on } U_j\cap U_k.
		\] 
		Hence
		\[
		\bigl[ \{ (U_{jk}, -\xi_{jk} m_j) \} \bigr] = 0 
		\quad \text{in } H^1(\{ U_{jk} \}, \mathcal{O}_R(F)),
		\]
		contradicting the assumption that 
		\([\{ (U_{jk}, \xi_{jk} m_j) \}] \ne 0\).
		Therefore $u_1(Y,X,L)\ne 0$.
	\end{proof}
	
		\begin{proof}[Proof of Theorem~\ref{Theorem 3.2}]
			Theorem~\ref{Theorem 2.12} tells us that if the first obstruction class $u_{1}(Y,X,L)$ is nontrivial, then $L$ is not semipositive. Hence, Theorem~\ref{Theorem 3.2} follows from Claim~\ref{Claim 3.4} and Theorem~\ref{Theorem 2.12}.
		\end{proof}

	\section{Discussion and an Open Question}
	
	We conclude by recording a related question that was suggested to us by Valentino Tosatti.
	
	\noindent
	\textbf{Question.}
	In the (original or modified) Grauert example, does the cohomology class $c_1(L)$ contain a closed positive $(1,1)$-current with minimal singularities whose local potentials are bounded?

	The motivation for this question is the following. At present, we do not know any explicit example of a nef and big $(1,1)$-class on a compact K\"ahler manifold which is not semipositive, but nevertheless contains a closed positive current with bounded local potentials. We believe that such examples should exist, and likely in abundance,
	although no concrete construction appears to be available in the literature so far.

\end{document}